\def\r{\rho}
\newcommand{\Ri}{{R_i}}
\newcommand{\Rj}{{R_j}}
\newcommand{\ri}{{\r_i}}
\newcommand{\rj}{{\r_j}}
\newcommand{\Rit}{{R_i^*}}
\newcommand{\Rjt}{{R_j^*}}
\newcommand{\rjt}{{\r_j^*}}
\newcommand{\s}{{\sigma^2}}
\newcommand{\si}{{\sigma_i}}
\newcommand{\sj}{{\sigma_j}}
\newcommand{\fe}{f_\var}
\newcommand{\Rem}{{\mathcal{R}_\var}}
\newcommand{\Remm}{{\tilde{\mathcal{R}}_\var}} 
\newcommand{\su}{{\operatorname{supp}}}
\def\var{\gamma}
\def\r{\rho}
\def\t{\tau}
\def\R{\mathbb{R}}
\def\RR{\mathbb{R}^2}
\def\RRR{\mathbb{R}^3}
\def\F{\mathcal{F}}
\def\E{\mathcal{E}}
\def\H{\mathcal{H}}
\newcommand{\der}{\frac{\partial}{\partial r}}
\newcommand{\derr}{\frac{\partial}{\partial r^2}}
\newcommand{\dersrs}{\frac{\partial^2}{\partial r'^2}}
\newcommand{\D}{{\Omega}}
\newcommand{\rik}{{\r_{i_k}}}
\newcommand{\ti}{{\theta_i}}
\newcommand{\tj}{{\theta_j}}
\newcommand{\Sij}{{S_{ij}}}
\newcommand{\Var}{{\operatorname{Var}}}
\theoremstyle{remark}
\author{Düring Bertram$^{(1)}$}
\address[1]{University of Warwick, Mathematics Institute, Gibbet Hill Road, CV47AL Coventry, UK}
\author{Fischer Michael$^{(2)}$}
\address[2]{University of Vienna, Fakultät für Mathematik, Oskar-Morgenstern-Platz 1, 1090 Vienna, Austria}
\author{Wolfram Marie-Therese$^{(1)}$}
\title[ELO]{An Elo-type rating model for players and teams of variable strength}
\date{}
\newtheorem{theorem}{Theorem}
\begin{document}
\maketitle

\begin{abstract}
The Elo rating system, which was originally proposed by Arpad Elo for chess, has become one of the most important rating systems in sports, economics and gaming nowadays. Its original formulation is based on two-player zero-sum games, but it has been adapted for team sports and other settings.\\
 In 2015, Junca and Jabin proposed a kinetic version of the Elo model, and showed that under certain assumptions the ratings do converge towards the players' strength. In this paper we generalise their model to account for variable performance of individual players or teams. We discuss the underlying modelling assumptions, derive the respective formal mean-field model and illustrate the dynamics with computational results.  
\end{abstract}

\section{Introduction}

Rating systems have become an indispensable tool to rank unobservable quantities, such as a players' strength based on observations, for example outcomes of games. Rating models were originally developed for sports; but are nowadays also used in gaming and financial markets. The Elo-rating system \cite{b:elo}  is one of the most prominent rating systems -- it is used in chess and other two-player zero sum games. Versions of the Elo-rating have been adopted for many other sports, for example basketball and football, see \cite{Forbes, i:nba}.  Other prominent rating systems include the Glicko rating system or Trueskill, see \cite{a:glicko98, a:trueskill06}.
Elo and Glicko are based on two-player zero sum games (here a player can be a single individual or an entire team), while Trueskill  is used in multi-player situations, as for
example in online gaming, see \cite{Minka2018TrueSkill2A, a:trueskill06}.

Elo himself tried to confirm the validity of the proposed rating
system using statistical experiments \cite{b:elo}. It was not until
2014 that Jabin and and Junca \cite{a:jabin} showed the convergence of
ratings towards the players' strength for a  continuous kinetic
version of the model. Junca \cite{junca:hal-03286591} later analysed the convergence of discrete ratings
in Robin-round tournament, in which players compete against all others in a round and discrete
ratings are updated after each such round.
 However, in this model the players' strength did
not change in time.  D\"uring et al. proposed a generalisation in
\cite{bib:During:elo}, in which players improve and loose skills based on the
outcome of games as well as daily performance fluctuations. A simpler
but related learning mechanism was proposed by Krupp in
\cite{mt:K2016}.

Kinetic models have been used very successfully to describe the behaviour of large interacting agent
systems in economics and social sciences. In all these applications interactions between agents -- such as encounters in games,  the trading of goods or the exchange of opinions -- are modelled via binary `collisions'. 
Toscani \cite{a:06toscani} was the first to introduce kinetic models in the context of opinion formation.
His ideas were later generalised for more complex opinion dynamics \cite{zbMATH06977672,zbMATH07146312,zbMATH06665924,zbMATH06875734,albi2016recent,albi2019boltzmann, bib:During:strongleaders, bib:During:inhomogeneous}, or in the context of wealth distribution \cite{bib:During:economykineticmodel, pareschi2014wealth} or knowledge growth in societies
\cite{BLW2016, BLW2016_2}. For a general overview on interacting
multi-agent systems and kinetic equations we refer to the book of
Pareschi and Toscani \cite{b:PLT13}.

The kinetic formulation of the Elo-model by Jabin and Junca assumes that each player is characterised by a constant strength $\rho$ (being an unobservable quantity) and a rating $R$, which changes based on the outcome of games. After each match between player $i$ and $j$ their respective ratings $R_i$ and $R_j$ are updated as follows
\begin{align}\label{eq:eloorig}
\begin{aligned}
\Rit &= \Ri + \var (S_{ij}- b(R_i-R_j)),\\
\Rjt &= \Rj + \var (-S_{ij} - b(R_j-R_i)).
\end{aligned}
\end{align}
Here  $b$ is an odd, monotone, increasing function, usually chosen as
$b(z) = \tanh(\nu z)$ with a scaling constant $\nu \in \R^+$. The
parameter $\var$ controls the speed of adjustment. The
outcome of the game is given by the random variable $\Sij$, which
takes the values $\lbrace -1,1\rbrace$, corresponding to a win or loss
(other, more fine grained outcomes like a tie can be added in a natural way). It is assumed to equal the expectation of $b(\rho_i-\rho_j)$, that is
\begin{align*}
\langle S_{ij}\rangle= \langle b(\ri-\rj)\rangle,
\end{align*}
 where $\ri,\rj$ are the underlying unobservable players'
 strength. Note that the interactions \eqref{eq:eloorig} are invariant
 with respect to translations and both the rating update \eqref{eq:eloorig} and the expected game outcome $\Sij$ depend only on the difference in $\rho$ and $R$, respectively, so these variables are defined on $\mathbb{R}.$
 
 Jabin and Junca then derived the corresponding  macroscopic model for the distribution of players $f(t,r,\rho)$ with respect to their rating $r$ and their strength $\rho$:
\begin{align}\label{eq:origelo}
\frac{\partial }{\partial t} f(t,\rho,r)+\frac{\partial }{\partial r}(a[f] f(t,\rho,r))&=0
\end{align}
with $$a[f] =\int\limits_{\mathbb{R}^3} w (r-r')
\big( b(\rho-\rho' )-b(r-r')\big)f(t,\rho',r') d\rho'  d r'$$ and initial condition $f(0,\rho,r) = f^0(\rho, r)$.
Here, the even probability distribution $w$ was introduced, to account for ranking dependent pairings in tournaments. If $w \equiv 1$ we consider a so-called \textit{all-play-all} game. If $w$ has compact support  only teams with close ratings compete. Possible choices for $w$ are
\begin{align}
w(r-r') &=e^{\frac{\log 2}{1+(r-r')^2}}-1 \;\text{ or }\; w(r-r') = \chi_{\{\lvert r-r'\rvert \leq c\}}. 
\end{align}
where $\chi$ denotes the indicator function (or smoothed variants
thereof) and $c>0$ is the maximal rating difference between paired competitors. If  $w>0$ Jabin and Junca \cite{a:jabin} showed that solutions to \eqref{eq:origelo} concentrate on the diagonal, providing the proof that the ratings indeed converge to the underlying strength.\\

\noindent In this work we propose a generalisation of the Elo-model
for teams of players with fluctuating strengths.
Our main contributions are the following
\begin{itemize}[nosep]
\item We propose and analyse an Elo-rating for teams, which includes
  stochastic variations in the team strength due to changes in the player setup.
\item We formally derive the respective Fokker-Planck equations and analyse their behaviour for long times.
\item We investigate the behaviour of solutions in the special case of
  competing teams whose players' strengths are distributed with a similar variance.
\item We illustrate the behaviour of the micro- and macroscopic models with computational experiments, consolidating and extending the analytical results.
\end{itemize}

This work is organised as follows: we propose a microscopic
generalisation of the well-known Elo-rating to teams of players and
illustrate the behaviour with microscopic simulations in
Section~\ref{re:microteams}. Section \ref{re:macroteams} focuses on
the corresponding formally derived macroscopic model and its
analysis. Next we investigate the model in the case of homogeneous
teams in Section \ref{s:scalinglimits} and report results of
computational experiments. Section~\ref{re:conclusion} concludes.

\section{A microscopic Elo-type rating for teams}\label{re:microteams}

We start by proposing a microscopic version of the Elo-rating for players with variable strength, which can also be used in the context of teams.

\subsection{Performance variations in teams and individual players}
In the following we consider a microscopic model, which accounts for performance fluctuations in teams as well as individuals. 
These fluctuations may be caused by varying individual or team
performance  (due to different line-ups) or for example by card
luck. We recall that small performance fluctuations in the individual
strength $\rho$ were modelled in \cite{bib:During:elo} by stochastic fluctuations in the strength. We will follow a different approach and replace the constant strength $\rho$ by a random variable $\lambda_\rho$ defined on a set of possible outcomes $\Omega_\rho$, which can be a finite set as well as an interval. This then allows us to define the stochastic process $\{ \lambda_\rho (t)\}_{t\in \R^+}$, whose expected value and variance will be denoted by
$\theta := \langle \lambda_\rho \rangle,$ and $\sigma^2 := \Var[\lambda_\rho],$ respectively.

We consider competing teams $T_i$, $T_j$ instead of
individual players. The corresponding expected value $\theta$ can be
interpreted as the mean strength of the team with a chosen line-up,
i.e.\ a subset of the team's players who will be playing in a particular game.  We assume similar as in \cite{a:jabin}, that the expected outcome of
the game between $T_i,T_j$ depends on the difference of teams' strengths through $b$:
\begin{align}\label{eq:boundedSij}
\langle S_{ij}\rangle= \langle b(\lambda_\ri-\lambda_\rj)\rangle.
\end{align}
If two teams $T_i$ and $T_j$ with ratings $\Ri$ and $\Rj$ meet, their ratings and strength after the game can be updated using  again 
\eqref{eq:eloorig}
where $\var$ is a scaling constant controlling the speed of adjustment. It is
usually chosen much smaller than the rating scores, in the hope that a player's rating slowly converges to its underlying strength. As discussed in the introduction we make the following assumption on $b$:
\begin{enumerate}[label=($B$)]
\item The function $b$ is  $C^3(\mathbb{R})$, monotonically increasing, bounded, odd and Lipschitz. \label{as:B}
\end{enumerate}
Since $b$ is non-linear the expected value and $b$ cannot be
interchanged. To calculate the expected value of $S_{ij}$ in \eqref{eq:boundedSij} we use the
\textit{Law of the unconscious statistician}
\cite{book:blitzstein2014introduction}, e.g.\ in the discrete case we obtain
\begin{align}\label{eq:lawofstat}
\langle S_{ij} \rangle  =
\sum_{x_i\in\Omega_\ri} \sum_{x_j\in\Omega_\rj} b(x_i-x_j) \tilde p(x_i, x_j) =
\sum_{x_i\in\Omega_\ri} \sum_{x_j\in\Omega_\rj} b(x_i-x_j) p(x_i)p(x_j) ,
\end{align} 
where $\tilde p$ denotes the probability of a possible line up $x_i$ playing against a line up $x_j$ and the second equality holds if this happens independently of each other. In the following we always assume this independence of the stochastic processes for team $T_i$ and $T_j$.

We can Taylor-expand $\langle S_{ij} \rangle $ in \eqref{eq:lawofstat} as described, for example, in  \cite{book:taylorprob}. Since we have $\langle \lambda_\ri-\lambda_\rj \rangle=\ti-\tj$ and $\Var [\lambda_\ri-\lambda_\rj]=\si^2+\sj^2$, it follows:
\begin{align}\label{eq:taylorE}
\begin{aligned}
\langle S_{ij} \rangle =\langle b\big( \lambda_\ri-\lambda_\rj\big) \rangle & \approx b(\ti-\tj)+\frac{1}{2}b''(\ti-\tj)(\si^2+\sj^2) \\
& =:b(\ti-\tj)+K(\ti-\tj,\si ,\sj ).
\end{aligned}
\end{align}
Note that the function $K$ is odd in the first argument and even in the other two. Similarly, the following holds for the variance
\begin{align}\label{eq:taylorV}
\Var [S_{ij}] \approx \big(b'(\ti-\tj) \big)^2(\si^2+\sj^2).
\end{align}

\subsection{Microscopic simulations}\label{subsec:nummericsmicro}
In the following we will illustrate the behaviour of the microscopic
model with various simulations. We consider $N$ teams $T_1 \ldots
T_N$; each team has $M$ players with strengths $\rik$ from which $m <
M$ distinct players are selected as line-up for each match. Let $\vec{\rho}_i=(\rho_{i_1},\dots ,\rho_{i_M})$  denote
the vector of all players in a team $T_i$. We assume without loss of
generality that the vector $\vec{\rho}_i$ is ordered.

Let us consider first the case that the $m$ players for the
line-up are chosen from the set of $M$ players uniformly. This can be done by generating $\binom{M}{m}$  normalised vectors $\vec{\lambda}_{i_k} \in \{ 0,1/m\}^M$ with $|\vec{\lambda}_{i_k}|=1$. Then the stochastic process $\lambda^t$ selects each vector $\vec{\lambda}_{i_k}$ with equal probability and we have
\begin{align*}
P(\Sij =1)=b(\vec{\lambda}_{i_k}\cdot \vec{\ri} - \vec{\lambda}_{j_k}\cdot \vec{\rj} ),
\end{align*}
for a match between $T_i$ and $T_j$. More realistic line-up selection would choose players directly proportional to their strength. 
We recall that the Elo-rating is translation invariant, hence we shift
the expected values $\theta$ to the interval $[0,10]$ in the
following.

As a first example consider the following football-inspired situation of $N=200$ teams with $M=23$ players each from which $m=11$ players are selected per match. 
For any team $T_i$ we then have $\theta_i=\frac{11}{23}\sum \rho_{i_k}$. We investigate two different initial setups for the teams:
\begin{enumerate}[label=($R$\arabic*)]
\item For every team $T_i$, $i=1, \ldots N$, the players' strengths
  $\rho_{i_k}$, $k=1,\dots 23$, is chosen randomly from the interval
  $\frac{1}{11}[5-\frac{5}{200}(i-1),5+\frac{5}{200}(i-1)]$. That is
  $\rho_{1_1}=\dots \rho_{1_{23}} =\frac{1}{11}5$ and $\rho_{200_k}\in
  \frac{1}{11}[0,10]$ for every $k$. In other words all teams have an
  approximate team strength of $\theta_i \approx 5$ with increasing variance $\sigma_i^2$ as can be seen in Figure~\ref{fig:mikro_var2}.
\label{enu:r1}
\item \label{enu:r2} For every team $T_i$, $i=1, \ldots N,$ the players' strengths $\rho_{i_k}$ are given by
\begin{align*}
\frac{1}{11}(4+\frac{6(i-1)}{197}+\eta_{i_k}),\quad k\in\{ 1,\dots ,23\},\quad \eta_{i_k}\in \mathcal{N}(0,1).
\end{align*}
The mean team strength of the first 198 teams is increasing 
from values around 4 to values around 10 and the variances $\sigma_i^2$ are of the same order. 
In addition we consider two teams, Germany ($i=199$) and Brazil
($i=200$), whose mean value and variance are motivated by the 2014
FIFA World Cup results, see \cite{i:GI}. We scale those values to $\theta_{\sf
  Ger}=10$ as well as $\theta_{\sf Bra}=9$. However, we do not scale the variance - it is signficantly higher in the generated data set, as can be seen in Figure~\ref{fig:mikro_var3}.
\begin{figure}[htb]
\centering
\begin{subfigure}[t]{.4\textwidth}
\includegraphics[height=4cm]{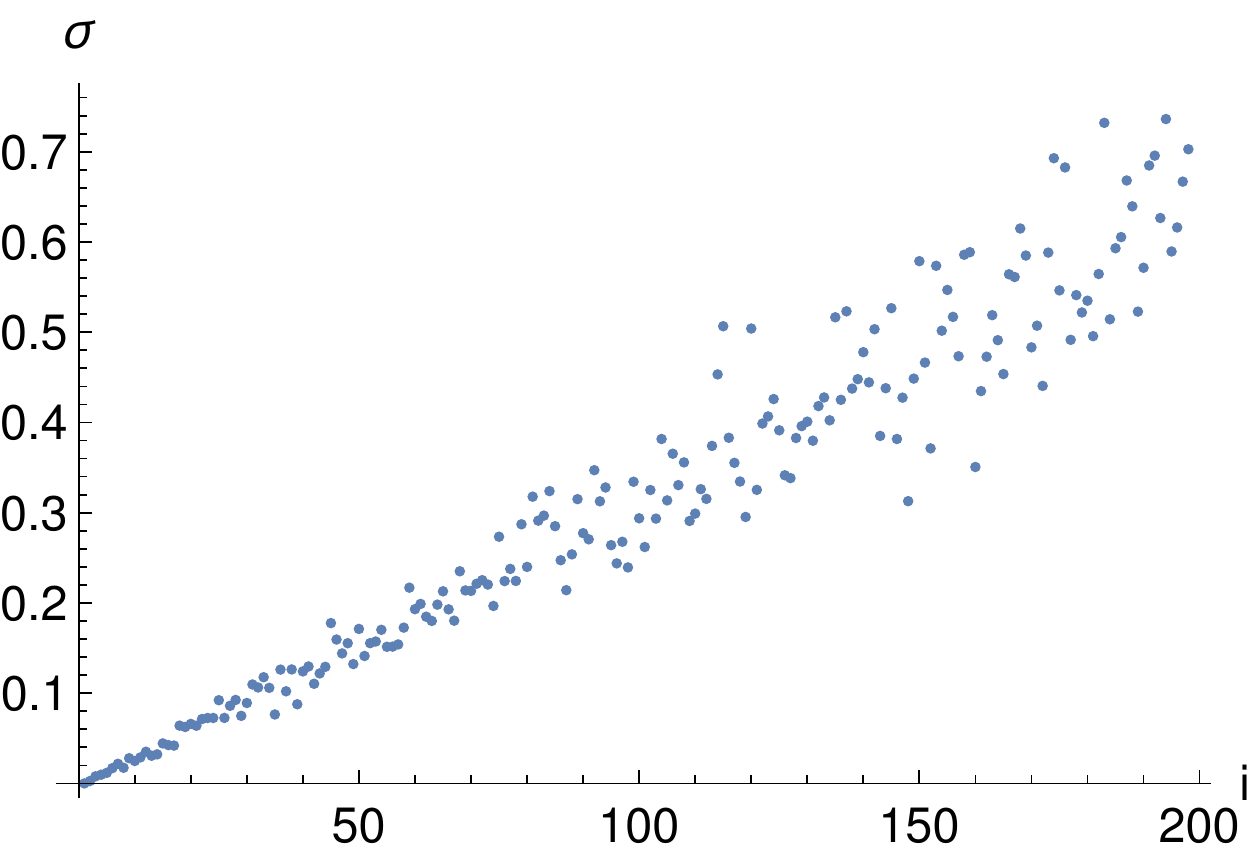} 
 \caption{The standard deviation $\sigma$ for every team $T_i$ based on rule \ref{enu:r1}.}\label{fig:mikro_var2}
  \end{subfigure} 
  \hspace*{1em}	
\begin{subfigure}[t]{.4\textwidth}
\includegraphics[height=4cm]{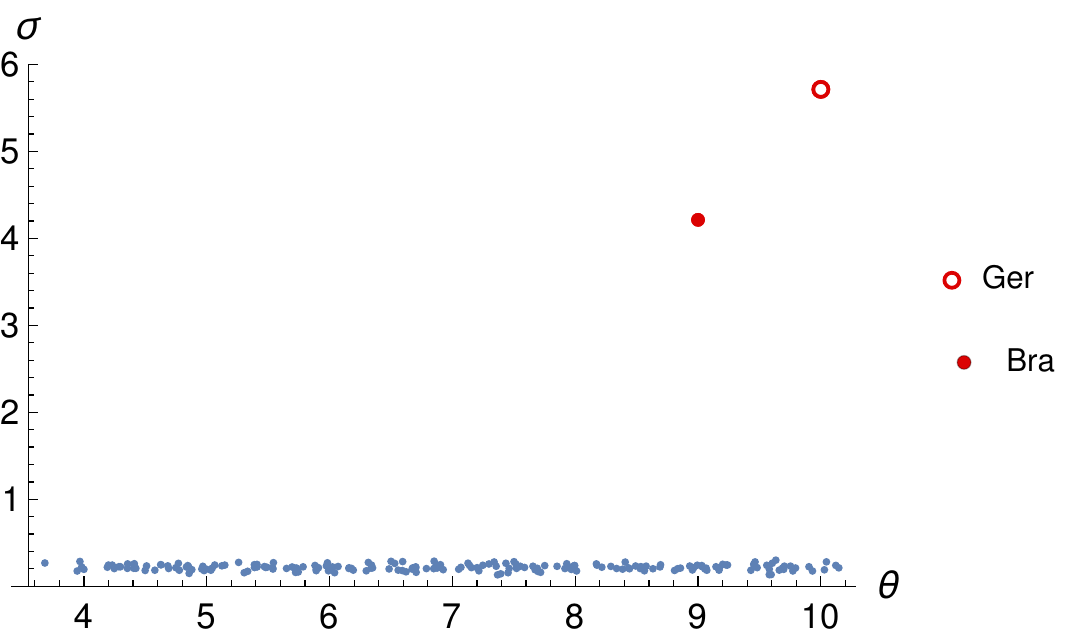} 
 \caption{The standard deviation $\sigma$ regarding $\theta$ based on rule \ref{enu:r2}.}\label{fig:mikro_var3}
  \end{subfigure}
 \caption{The standard deviation of the two setups visualised, both calculated over $10^4$ Monte Carlo experiments per team.}
 \label{fig:mikro_var}
\end{figure}
\end{enumerate}
We carry out direct Monte Carlo simulation using Bird's scheme, see
\cite{b:PLT13}, for these two initial setups. We choose time steps of $\Delta t=0.1 $
and perform $25$ matches per time step. 
The results were then averaged over 50 realisations after $2\cdot
10^6$ time-steps. Figure \ref{fig:mikro1} shows the final distribution
of teams for the two different initial setups. In
Figure~\ref{fig:mikro1a} we see clustering around the point $(\theta ,
R)=(5,5)$ as expected for setup \ref{enu:r1}. However, an interesting
phenomenon is that teams with $\theta <5$ consistently under-perform
and conversely $\theta >5$ over-perform, as they lie above and below
the line $\theta = R$, respectively. In Figure~\ref{fig:mikro1b}, we
see convergence towards a steady state for the 198 teams created using
the rule \ref{enu:r2}. However, this straight line has a steeper slope
than $\theta = R$. Furthermore, the German and Brazilian team are
clear outliers, both are under-performing relative to their strengths.
%
 
\begin{figure}[htb]
 \centering
\begin{subfigure}[t]{.4\textwidth}
 \hspace*{2em}
\centering
\includegraphics[height=4cm]{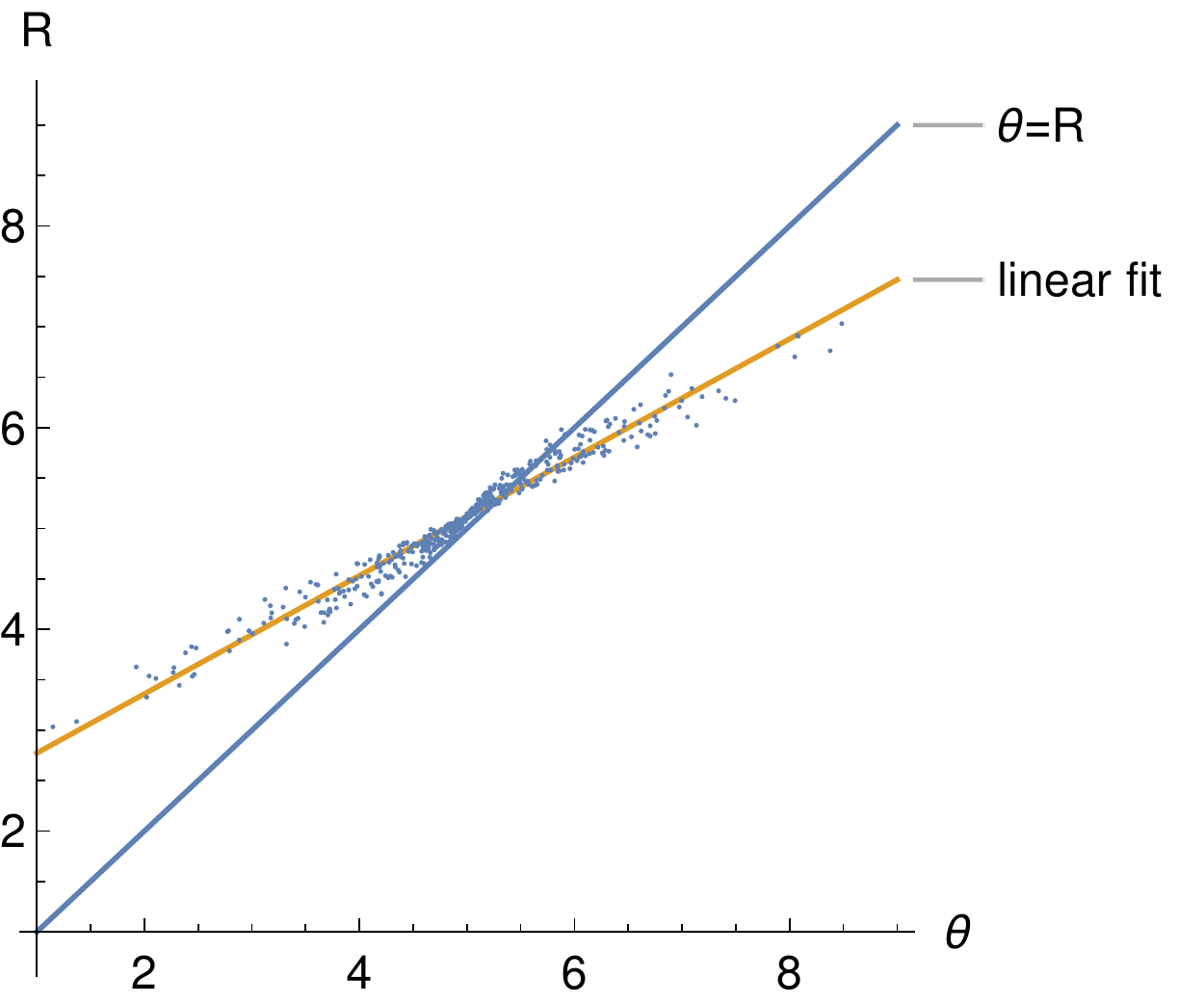} 
 \caption{Setup \ref{enu:r1} with increasing variance in the team performance.}\label{fig:mikro1a}
 \end{subfigure}
\hspace*{3em}
\begin{subfigure}[t]{.4\textwidth}
 \centering
\includegraphics[height=4cm]{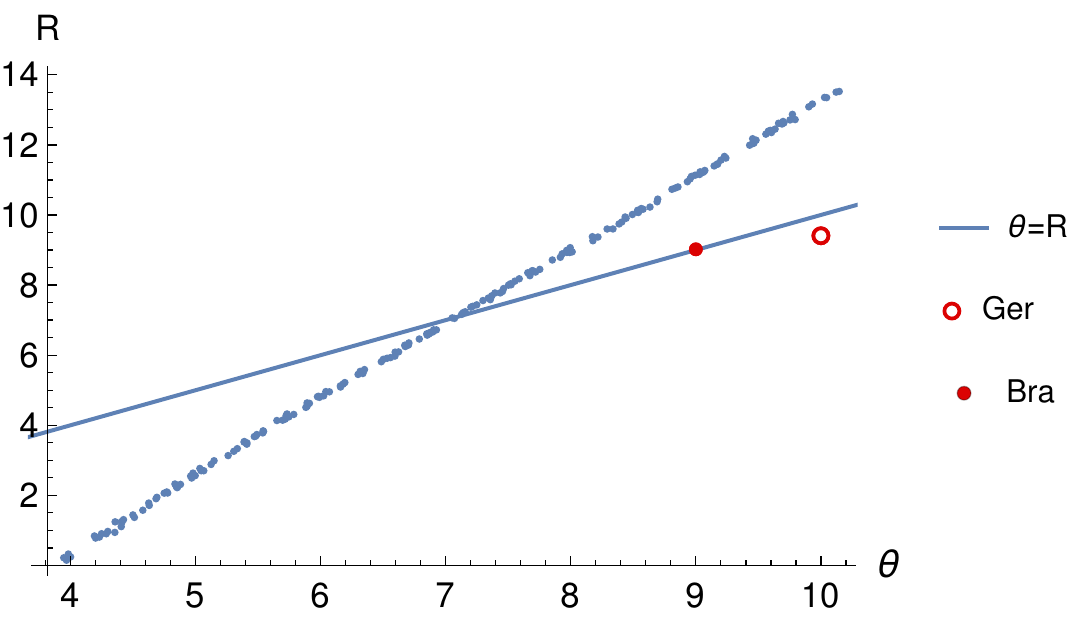} 
 \caption{Setup \ref{enu:r2} with constant variance $\sigma=1$ in the team performance.}\label{fig:mikro1b}
 \end{subfigure} 
 \caption{Stationary team distribution for setup \ref{enu:r1} and \ref{enu:r2}.}\label{fig:mikro1}
\end{figure}

\section{A macroscopic Elo-model for teams}\label{re:macroteams}

In general, the expected value $\theta$ and variance $\sigma^2$ of the
microscopic model \eqref{eq:eloorig} are finite since they
result from discrete, finite random processes. Compared to the formal
derivation of the macroscopic model in previous
works \cite{a:jabin, mt:K2016, b:elo}, we need additional assumptions on  the moments of $\sigma$ because of the unboundedness of $K$ in \eqref{eq:taylorE} in the 2$^{\text{nd}}$ and $3^{\text{rd}}$ argument when passing from micro to macro. 

Let $f(t,\theta,\sigma,r)$ be the distribution of teams at time $t$
with expected team performance $\theta$, variance $\sigma^2$ and
rating $r$. The derivation of the macroscopic model
\eqref{eq:kineticmodel} below is based on the following assumptions:
\begin{enumerate}[label=($A$\arabic*)]
 \item \label{a:finit} Let $f^0 \in H^1(\mathbb R^3)$ with $f^0 \geq 0$ and having compact support. Furthermore, we assume:
 \begin{align}
&\int_{\mathbb R^3} f^0(\theta,\sigma,r) d\theta d\sigma d r =1,\;\; \int_{\mathbb R^3} R f^0(\theta,\sigma,r)d\theta d\sigma d r = 0, \;\;
 \int_{\mathbb R^3} \theta f^0(\theta,\sigma,r) d\theta d\sigma d r = 0,\notag\\
&\int_{\mathbb R^3} \sigma f^0(\theta,\sigma,r) d\theta d\sigma d r = 1,\;\; \int_{\mathbb R^3}  \sigma^2 f^0(\theta,\sigma,r) d\theta d\sigma d r = C_{\sigma^2}.
 \end{align}
\item Let the interaction rate function $w\geq 0$ be an even function with $w \in C^2(\mathbb R^3) \cap L^{\infty}(\mathbb R^3)$.
\end{enumerate}
In Appendix \ref{re:appendix} we derive the following macroscopic
Fokker-Planck equation for the distribution of teams $f=f(t,r,\theta, \sigma)$:
\begin{align}\label{eq:kineticmodel}
\begin{split}
\frac{\partial }{\partial t} f(t,\theta,\sigma,r)+\frac{\partial }{\partial r}(a[f] f(t,\theta,\sigma,r))&=0, \qquad \text{ in }[0,T)\times \mathbb R^3, \\
f(t=0,\theta,\sigma, r)&=f^0(\theta,\sigma, r), \qquad \text{ in }\mathbb R^3 ,
\end{split}
\end{align}
with
\begin{align*}
a[f] & =\int\limits_{\mathbb{R}^3} w (r-r')
\big( b(\theta-\theta' )+ \frac{1}{2}b''(\theta-\theta')(\sigma^2+\sigma'^2)-b(r-r')\big)f(t,\theta',\sigma',r')\, d\theta' d\sigma' d r'.
\end{align*}
Here $b''$ is the second derivative introduced in \eqref{eq:taylorE}. We note that the operator $a[\cdot]$ includes an additional correction
term resulting from the variance of the distribution of strengths.
This term's sign depends on the sign of $\theta-\theta'$, either
decreasing or increasing the adjustment of ratings.
This is consistent with the under- and over-performance of teams
  observed in the microscopic simulations in Section~\ref{re:microteams}.\ref{subsec:nummericsmicro}.

\subsection{Analysis of the Fokker-Planck equation}\label{sec:analys}

Since $b$ and $b''$ are odd, the total mass is conserved as 
\begin{align*}
\frac{\partial }{\partial t} \int_{\mathbb{R}^3} f(t,\theta,\sigma,r)\,d\theta d\sigma d r =0.
\end{align*}
and therefore $\int_{\mathbb{R}^3}f(t,\theta,\sigma,r)\,d\theta d\sigma d r= \int_{\mathbb{R}^3}f^0(\theta,\sigma,r)\,d\theta d\sigma d r=1$ for all times $t>0$.

Next, we show the existence of a classical solution to
\eqref{eq:kineticmodel} following arguments from \cite{a:HT2008, a:BCP1997}. 
\begin{theorem}\label{theo:existence1}
Assume that the initial datum $f^0$ 
\begin{enumerate}[nosep]
\item is compactly supported in the phase space, i.e.\ $\operatorname{supp}_{(r,\theta,\sigma )}f^0$ is bounded,
\item is $C^1$-regular and bounded:
\begin{align*}
 \sum_{0 \leq |\alpha| \leq 1} \left\| \nabla_r^\alpha f^0 \right\|_{L^{\infty}} < \infty.
\end{align*}
\end{enumerate}
Then, for any $t \in (0, \infty)$, there exists a unique classical
solution $f \in C^1\big( [0, t) \times \RRR \big)$ to \eqref{eq:kineticmodel}.
\end{theorem}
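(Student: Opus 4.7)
The plan is to prove existence and uniqueness via a Picard-type fixed-point argument based on characteristics, exploiting the fact that \eqref{eq:kineticmodel} is a nonlocal transport equation in $r$ alone: the variables $\theta$ and $\sigma$ appear only as passive parameters. Hence the characteristics $R(t;\theta,\sigma,r_0)$ satisfy $\dot{R}=a[f](t,\theta,\sigma,R)$ with $R(0)=r_0$, while $\theta$ and $\sigma$ are constant along them. In particular, if $K_0$ denotes the projection of $\operatorname{supp} f^0$ onto the $(\theta,\sigma)$-plane (compact by assumption), then $\operatorname{supp} f(t,\cdot)\subset K_0\times\mathbb{R}$ for all $t\geq 0$, which is the key structural fact.

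The first step is to obtain uniform a priori bounds on $a[f]$. Because $K_0$ is compact, the factor $\tfrac12 b''(\theta-\theta')(\sigma^2+\sigma'^2)$ in the velocity is bounded whenever both arguments lie in $K_0$, even though it is not globally bounded in $\sigma$. Combined with $b\in L^\infty$ (assumption \ref{as:B}), $w\in L^\infty$, and mass conservation $\|f(t,\cdot)\|_{L^1}=1$, this yields $\|a[f](t,\cdot)\|_{L^\infty}\leq C$ and, using $w\in C^2$ to differentiate under the integral, $\|\partial_r a[f](t,\cdot)\|_{L^\infty}\leq C$, with $C$ independent of $t$. Consequently the characteristics are globally defined, the flow $r_0\mapsto R(t;\theta,\sigma,r_0)$ is a $C^1$-diffeomorphism with uniformly bounded Jacobian, and the $r$-support grows at most linearly.

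Next I set up the iteration: $f^{(0)}(t,\cdot):=f^0$, and $f^{(n+1)}$ is the unique classical solution of the linear equation $\partial_t f+\partial_r(a[f^{(n)}]f)=0$ with initial datum $f^0$, given explicitly along the flow of $a[f^{(n)}]$ by
\begin{align*}
f^{(n+1)}\bigl(t,\theta,\sigma,R^{(n+1)}(t;\theta,\sigma,r_0)\bigr)=f^0(\theta,\sigma,r_0)\exp\!\Bigl(-\!\int_0^t\partial_r a[f^{(n)}]\bigl(s,\theta,\sigma,R^{(n+1)}(s)\bigr)\,ds\Bigr).
\end{align*}
The bounds of the previous step are preserved along the iteration with constants independent of $n$ on any fixed interval $[0,T]$, and each $f^{(n)}$ is $C^1$ since $f^0\in C^1$ and $a[f^{(n-1)}]\in C^1$. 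To close the argument I estimate $a[f^{(n)}]-a[f^{(n-1)}]$ by $\|f^{(n)}-f^{(n-1)}\|_{L^1}$, using boundedness of $w,b,b''$ on the common compact support, then compare the flow maps and use a Gronwall argument along characteristics to obtain a contraction in $C([0,T^*];L^1(\mathbb{R}^3))$ for $T^*$ small. Banach's fixed-point theorem then gives a unique solution on $[0,T^*]$, and since the a priori $L^\infty$ and support bounds depend only on $T$, the solution extends to arbitrary $[0,T)$.

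The main obstacle I expect is precisely this contraction step: one has to propagate simultaneously the $L^1$-closeness of $f^{(n)}$ and $f^{(n+1)}$ and the closeness of the two flow maps generated by $a[f^{(n)}]$ and $a[f^{(n-1)}]$, which relies crucially on the compact support in $(\theta,\sigma)$ to tame the unbounded coefficient $b''(\theta-\theta')(\sigma^2+\sigma'^2)$. Once the fixed point is obtained, $C^1$-regularity in $(t,r,\theta,\sigma)$ follows by differentiating the characteristic ODE with respect to $r_0$ (and trivially in the frozen parameters $\theta,\sigma$), using $f^0\in C^1$ and the established $C^1$-regularity of $a[f]$; uniqueness is an immediate consequence of the same contraction estimate applied to the difference of two solutions with identical initial data.
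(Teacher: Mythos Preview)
Your proposal is correct and follows essentially the same high-level strategy as the paper: a priori control along characteristics, local existence via a Banach fixed-point argument, and global continuation from a Gronwall bound on a $W^{1,\infty}$-type quantity.

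The technical choices differ in a few instructive ways. You run an explicit Picard iteration on the linearised transport equation, write the iterate via the flow-map formula, and contract in $C_tL^1_x$; the paper instead recasts the equation through the nonlinear transport operator $\mathcal H = \partial_t + a[f]\,\partial_r$, bounds $\mathcal H(f)$ and $\mathcal H(\partial_r f)$ pointwise, and contracts directly in $C^1$. Your route is somewhat more standard for Vlasov-type equations and makes the continuation step cleaner, since the $L^1$ mass is conserved. You also make explicit a point the paper leaves implicit: because $(\theta,\sigma)$ are frozen along characteristics, the $(\theta,\sigma)$-support stays inside the compact set $K_0$, which is precisely what tames the otherwise unbounded factor $b''(\theta-\theta')(\sigma^2+\sigma'^2)$ in $a[f]$. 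The paper avoids confronting this directly by working in the $w\equiv 1$ case, where $\partial_r a[f]$ involves only $b'(r-r')$ and not the $\sigma$-dependent term; your observation shows why the argument survives for general $w$ as well.
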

\begin{proof}
In the following we consider the 'all play all' setting, that is $w
\equiv 1$; our arguments can, however, be generalised for 
interaction functions $w$ satisfying {\it(A2)}. We start by showing that the solution cannot blow up in finite time. Next we prove local in time existence based on a priori estimates and a fixed point argument. Global existence follows from a continuation argument using energy estimates.

First we show that the local solution $f$ remains uniformly bounded. We rewrite \eqref{eq:kineticmodel} in a non-conservative form, 
\begin{align}\label{eq:nonconservative}
\begin{split}
\frac{\partial }{\partial t} f(t,\theta,\sigma,r)+ a[f] \frac{\partial }{\partial r}f(t,\theta,\sigma,r)=- f(t,\theta,\sigma,r) \frac{\partial }{\partial r} a[f],
\end{split}
\end{align}
where we have
\begin{align*}
 -f(t,\theta,\sigma,r) \frac{\partial }{\partial r} a[f] = f(t,\theta,\sigma,r)\int\limits_{\mathbb{R}^3} b'(r-r') f(t,\theta',\sigma',r') \,d\theta' d\sigma' d r',
\end{align*}
which yields
\begin{align*}
\left\| \frac{\partial }{\partial r} a[f] \right\|_{L^{\infty}} \leq L,
\end{align*}
where $L$ is the Lipschitz constant of $b$ and we used that the total mass equals one.
Next we  consider a trajectory starting at time $\t_0\in\mathbb R^+$ in $(r_0,\theta_0,\sigma_0)$, then the characteristics are given by
\begin{align}
\frac{\partial }{\partial t} r(t)=a[f],\qquad \frac{\partial }{\partial t} \theta(t)=\frac{\partial }{\partial t} \sigma(t)=0.
\end{align}
Therefore,
\begin{align*}
\frac{\partial }{\partial t}f(t,\theta,\sigma,r) \leq L f(t,\theta,\sigma,r),
\end{align*}
and Gronwall's lemma gives
\begin{align}\label{eq:gronwall}
\left\| f(t) \right\|_{L^{\infty}} \leq e^{Lt}\left\|f^0 \right\|_{L^{\infty}}.
\end{align}
Hence, the solution cannot blow up in finite time. 

We continue with the existence of a local solution following Theorem 3.1 in \cite{a:BCP1997}. To this end we investigate the non-linear transport operator $\H$:
\begin{align}\label{eq:transport}
\H(f) = - f(t,\theta,\sigma,r) \frac{\partial }{\partial r}
  a[f],\qquad \text{with } \H:=\frac{\partial }{\partial t} + a[f] \frac{\partial }{\partial r},
\end{align}
in the following. There exist positive constants $C_1,C_2$  such that
\begin{align}\label{eq:bounds}
\begin{aligned}
& |{\H}(f)| \leq \left\|b' \right\|_{L^{\infty}} |f| =C_1 |f|,\\
& |{\H}(\frac{\partial }{\partial r}f)| \leq|f|\left\|b'' \right\|_{L^{\infty}} + \left| \frac{\partial }{\partial r}f\right|\left\|b' \right\|_{L^{\infty}}= C_2  \big( |f| + \left| \frac{\partial }{\partial r}f \right| \big) ,
\end{aligned}
\end{align}
because of $b$ being Lipschitz and therefore we have $L_\infty$ bounds for its derivatives. 
Moreover, the map $H$  defined by
\begin{align*}
H: C^1\big( [0, t) \times \RRR \big) \to C^1\big( [0, t) \times \RRR \big), \quad
f\mapsto (\H)^{-1} (-f \frac{\partial }{\partial r} a[f])
\end{align*}
is bounded since we can use the estimates \eqref{eq:bounds} together with the bounded inverse theorem. Next we consider the solution along the trajectories $(\theta (t),\sigma (t), r(t))$
\begin{align*}
f(t,\theta (t),\sigma (t), r(t))= f(0,\theta_0, \sigma_0, r_0)+\int_{0}^t  \frac{\partial }{\partial r}\bigg( a[f(t,\theta (t),\sigma (t), r(t))] f(t,\theta (t),\sigma (t), r(t))\bigg) dt.
\end{align*}
We can then use the previous estimates to choose a $t>0$ such that $H$
is a contraction. Using Banach's fixed point theorem we obtain a unique local solution $f\in C^1 \big( [0,t]\times \RRR \big)$.
Let $\F$ be the $W^{1, \infty}$-norm of $f(t)$,
\begin{align*}
\F (t)=\sum_{0 \leq |\alpha|\leq 1} \left\| \nabla_r^\alpha f(t) \right\|_{L^{\infty}}.
\end{align*}
Using \eqref{eq:bounds} and again Gronwall's lemma we get
\begin{align*}
\frac{\partial }{\partial t} \F (t)\leq C_3 \F (t)
\end{align*}
and therefore we have the upper bound
\begin{align*}
 \F (t) \leq \F (0)e^{C_3 t},\qquad \forall t\in [0,T).
\end{align*}
The energy bound in $W^{1,\infty}$ allows us to use  the standard continuation principle, giving the global extension of the local solution.
\end{proof}

We continue by analysing the behaviour of the moments of $f$.
We define the $s$-th moment for $s\in\mathbb{N}$ with respect to $r$
(and similar the moments with respect to $\theta,\sigma$),
\begin{align}\label{eq:momentfpe}
m_{s,r}(t)=\int_{\R^3}r^s f\,d\theta dr d\sigma.
\end{align}
The evolution with respect to $\sigma$ and $\theta$ is trivial, as the
function does not change with respect to these variables. The
evolution of the second moment w.r.t. to $r$ satisfies:
\begin{align}\label{eq:duality}
&\frac{d}{dt}\int\limits_{\RRR}r^2  f\,d\theta dr d\sigma
=-\int\limits_{\RRR}r^2\frac{d}{dr}(a[f]f)\,d\theta dr d\sigma \notag \\
&=2\int\limits_{\mathbb{R}^6}r w (r-r')
\Big( b(\theta -\theta')+ \frac{\sigma^2+\sigma'^2}{2} b''(\theta-\theta')-b(r-r')\Big) ff'\,d\theta'  dr' d\sigma' d\theta  dr d\sigma \notag \\
&=-\int\limits_{\mathbb{R}^6}r  b(r-r') w(r-r') f'f \,d\theta'  dr' d\sigma' d\theta  dr d\sigma
-\int\limits_{\mathbb{R}^6}r'  b(r'-r )w(r'-r) f'f\, d\theta'  dr' d\sigma' d\theta  dr d\sigma \notag \\
&\quad+2\int\limits_{\mathbb{R}^6}r  w(r-r')\big( b(\theta -\theta')+ \frac{\sigma^2+\sigma'^2}{2} b''(\theta-\theta')\big) ff'\,d\theta'  dr' d\sigma' d\theta  dr d\sigma \notag\\ 
&=-\int\limits_{\mathbb{R}^6}(r-r') b(r-r')  w(r-r')ff'\,d\theta'  dr'
                                                                                                                                                                                      d\sigma' d\theta  dr d\sigma \notag \\&<0.
\end{align}
Here, we used the short-hand notation $f' = f(t, \theta', \sigma', r')$. Furthermore, we used that for $r-r'<0$ the function $b(r-r')<0$ is negative, since $b$ is odd and monotonically increasing. The latter does not hold in general for $b+b''$, however, the second integral vanishes since the integrand is still odd in $\theta$ and $r$. Therefore, the second moment in $r$ decreases over time and we expect convergence towards a stationary state.
Our computational experiments confirm this expected convergence. However, we are not able to compute these stationary states explicitly as it was done in \cite{a:jabin}. 

\subsection{Numerical results for the macroscopic model}\label{subsec:nummericsmacro}

We perform several computational experiments illustrating the dynamics of \eqref{eq:kineticmodel} using a finite difference scheme. It is based on the generalisation of a finite difference scheme for conservation laws with discontinuous flux presented by Towers in \cite{a:99fvm}. This generalisation is straight-forward, as \eqref{eq:kineticmodel} has only transport in $r$ direction. Let $f^n_{j,l,m}$ denote the solution at the discrete points $(j\Delta r, l\Delta \theta, m\Delta \sigma)$, $j, l, m \in \mathbb{N},$ and time $t^n= n \Delta t$, $n \in \mathbb{N},$ with discrete positive increments $\Delta r, \Delta \theta$, $\Delta \sigma$ and $\Delta t$. Then the explicit scheme reads as follows:
\begin{align*}
f^{n+1}_{j,l,m}=f^n_{j,l,m}-\frac{\Delta t}{\Delta r}(a^n_{j+1/2,l,m} h_{j+1/2,l,m}^n-a^n_{j-1/2,l,m}h_{j-1/2,l,m}^n),
\end{align*}
with cell averages $a^n_{j+1/2,l,m}=\frac{1}{\Delta r}\int_{j}^{j+1}a[f^n(r,l\Delta\theta,m\Delta\sigma)]\,dr$. The function $h$ is chosen depending on the sign of the averaged flux (as in the usual Godunov scheme) that is
\begin{align*}
h^n_{j+1/2,l,m}=\begin{cases}
f^n_{j,l,m}, & a^n_{j+1/2,l,m}\geq 0,\\
f^n_{j+1,l,m}, & a^n_{j+1/2,l,m}< 0.
\end{cases}
\end{align*}
In Figure~\ref{fig:makro_1} we visualise the first marginals $\int_\R
f(t,r,\theta,\sigma)\,d\sigma$ and $\int_\R
f(t,r,\theta,\sigma)\,d\theta$ of the team distribution at time $t=5$
using timesteps of size $\Delta t=10^{-5}$ and $\nu=1$. The computational domain
is $\Omega=[0,10]\times [0,10]\times [0,1]$ and the spatial
discretisation was set to $\Delta r=\Delta \theta = \Delta
\sigma=5\cdot 10^{-2}$, the initial distribution of teams uniform and
normalised. The left  plot in Figure~\ref{fig:makro_1} shows that
ratings converge towards the mean strength for $\theta \in [6,8]$, but
are blurred for smaller and larger means. We observe a similar over- and
under-performance as in the microscopic simulations in Figure~\ref{fig:mikro1b}.
The right plot illustrates the decrease of $m_{2,r}$ in the direction $\theta$. The larger the uncertainty $\sigma$, the less accurate the ratings as all teams get a similar rating (around $7$).

\begin{figure}[htb]
\centering
\includegraphics[width=\textwidth]{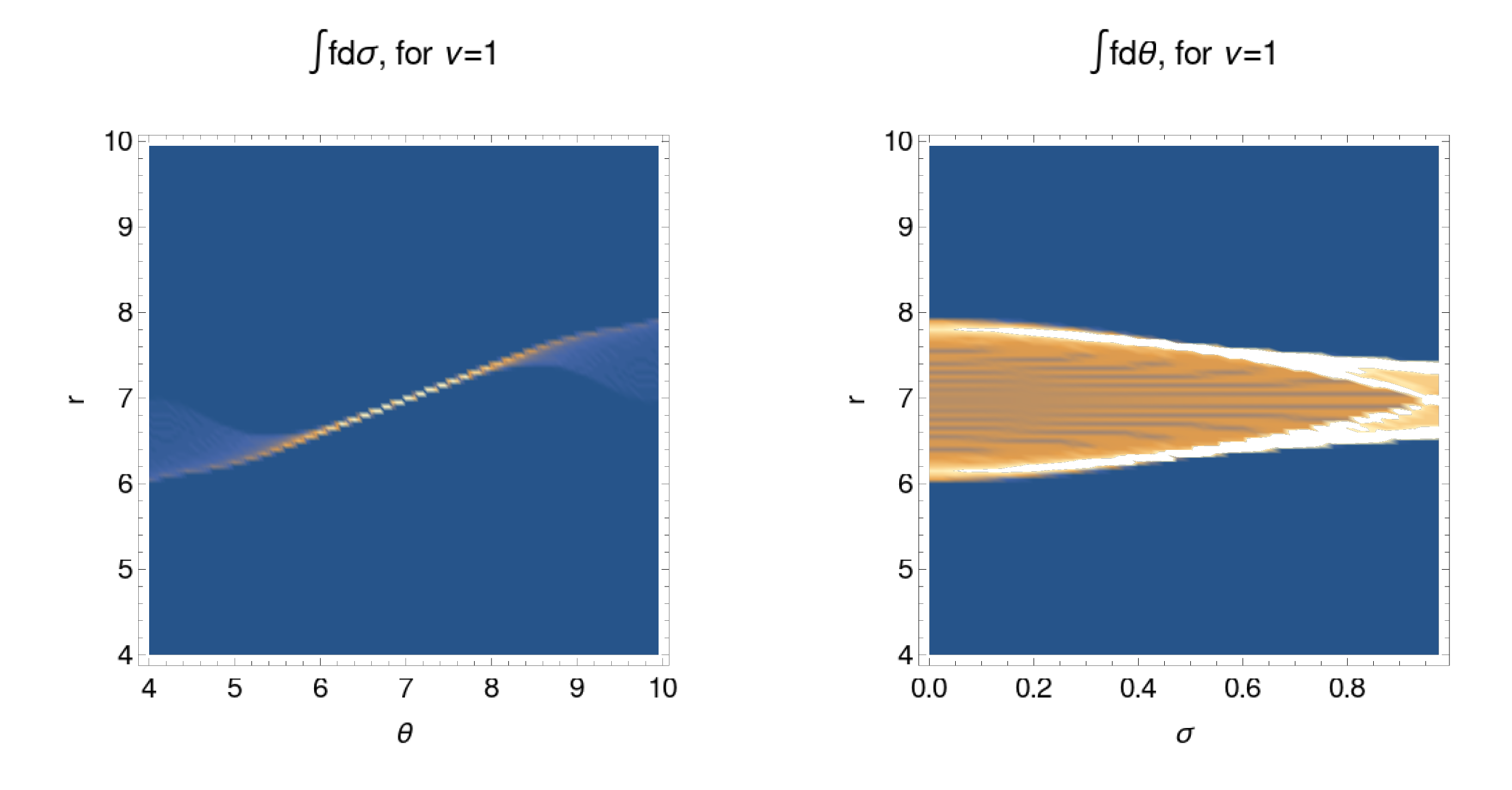} 
 \caption{Second and third marginal of the team distribution $f$ at time
   $t=5$ for $f^0=1$.}
\label{fig:makro_1}
\end{figure}



\section{Special scaling limits and homogeneous player distributions}
\label{s:scalinglimits}

Our microscopic computational results suggest that if all teams have
the same player variance, then the ratings converge to the underlying
mean team strength. In this case, however, the integral over $\sigma$
can be seen as a point evaluation and we can simplify
\eqref{eq:kineticmodel} for constant $\sigma\in\R^+_0$:
\begin{align}\label{eq:kineticmodel2}
\begin{split}
\frac{\partial }{\partial t} f (t,\theta,r)+\frac{\partial }{\partial r}(a[f ] f (t,\theta,r))&=0\\
f (t=0,\theta, r)&=f^0(\theta, r),
\end{split}
\end{align}
with $a$ changed to
\begin{align}\label{eq:kernel2}
a[f] & =\int\limits_{\RR} w (r-r')
\big( b(\theta-\theta' )+ \sigma^2 b''(\theta-\theta')-b(r-r')\big)f (t,\theta',r') dr'  d \theta'.
\end{align}

We discuss the existence of a unique solution and the analysis of the
moments. Furthermore we consider the relative energy to prove convergence of the
team strengths to ratings.
The existence of a classical solution itself follows from the same arguments as in Theorem~\ref{theo:existence1}. 
\begin{theorem}\label{theo:existence2}
Assume that the initial datum $f^0$
\begin{enumerate}[nosep]
\item is compactly supported in the phase space, i.e.\ $\su_{(r,\theta )}f^0$ is bounded
\item is $C^1$-regular and bounded:
\begin{align*}
\displaystyle \sum_{0 \leq |\alpha| \leq 1} \left\| \nabla_r^\alpha f^0 \right\|_{L^{\infty}} < \infty.
\end{align*}
\end{enumerate}
Then, for any $T \in (0, \infty)$, there exists a unique classical
solution $f \in C^1\big( [0, T) \times \RR)$ to \eqref{eq:kineticmodel2}.
\end{theorem}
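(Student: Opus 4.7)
The plan is to mirror the three-step argument of Theorem~\ref{theo:existence1} in the reduced two-dimensional phase space $(\theta,r) \in \mathbb{R}^2$, since the only structural change in \eqref{eq:kineticmodel2}--\eqref{eq:kernel2} compared to \eqref{eq:kineticmodel} is that the term $\sigma^2 b''(\theta-\theta')$ now carries a fixed constant $\sigma$, which does not affect any of the estimates qualitatively. The integration variable $\sigma'$ simply disappears and all integrals are taken over $\mathbb{R}^2$ instead of $\mathbb{R}^3$.

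First, I would establish an a priori $L^\infty$ bound precluding finite-time blow-up. Rewriting \eqref{eq:kineticmodel2} in non-conservative form as $\partial_t f + a[f]\,\partial_r f = -f\,\partial_r a[f]$, a direct calculation in the all-play-all case $w\equiv 1$ yields
\begin{align*}
-f(t,\theta,r)\,\partial_r a[f] = f(t,\theta,r)\int_{\mathbb{R}^2} b'(r-r')\,f(t,\theta',r')\,d\theta' dr',
\end{align*}
which is dominated by $L\,f$ thanks to mass conservation and the Lipschitz constant $L$ of $b$. Integrating along the characteristics $\dot r = a[f]$, $\dot\theta = 0$ and applying Gronwall's lemma then gives $\|f(t)\|_{L^\infty} \leq e^{Lt}\|f^0\|_{L^\infty}$, exactly as in \eqref{eq:gronwall}.

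Local existence follows by a Banach fixed-point argument on the nonlinear transport operator $\mathcal{H} := \partial_t + a[f]\,\partial_r$. Using the bounded inverse theorem together with the analogues of \eqref{eq:bounds},
\begin{align*}
|\mathcal{H}(f)| \leq \|b'\|_{L^\infty}|f|, \qquad |\mathcal{H}(\partial_r f)| \leq \|b''\|_{L^\infty}|f| + \|b'\|_{L^\infty}|\partial_r f|,
\end{align*}
I would show that the map $H: f \mapsto \mathcal{H}^{-1}(-f\,\partial_r a[f])$ is a contraction on a suitable closed ball of $C^1([0,t]\times \mathbb{R}^2)$ for $t$ small enough, yielding a unique local $C^1$ solution. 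Finally, setting $\mathcal{F}(t) := \sum_{0\leq|\alpha|\leq 1} \|\nabla_r^\alpha f(t)\|_{L^\infty}$, the above estimates and one more application of Gronwall give $\mathcal{F}(t) \leq \mathcal{F}(0)\,e^{Ct}$, so the standard continuation principle extends the local solution globally to $[0,T)$.

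The closest thing to an obstacle is verifying that the extra correction $\sigma^2 b''(\theta-\theta')$ in \eqref{eq:kernel2} is genuinely harmless: because $\sigma$ is a fixed constant and $b \in C^3$ with bounded derivatives, this term contributes only a bounded additive piece to $\|a[f]\|_{L^\infty}$ and does not enter $\partial_r a[f]$ at all (its $r$-derivative is zero). Consequently the constants $C_1, C_2, C_3$ from the proof of Theorem~\ref{theo:existence1} change only by explicit factors depending on $\sigma$ and $\|b''\|_{L^\infty}$, and all three steps transfer verbatim.
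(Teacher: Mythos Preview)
Your proposal is correct and follows precisely the route the paper itself takes: the paper states that the proof of Theorem~\ref{theo:existence2} is obtained by adapting that of Theorem~\ref{theo:existence1} and omits the details, and your write-up is exactly that adaptation, including the key observation that the extra term $\sigma^2 b''(\theta-\theta')$ is independent of $r$ and hence does not enter $\partial_r a[f]$.
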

\noindent The proof of Theorem \ref{theo:existence2} can be easily
adapted from the proof of Theorem \ref{theo:existence1} and is omitted here. Equation \eqref{eq:kineticmodel2} is conservative, hence the total mass is preserved, and the moments with respect to $\theta$ is zero. Again, the second moment w.r.t. to $r$ is decreasing (using similar arguments as in \eqref{eq:duality}):
\begin{align*}
\frac{d}{dt}\int\limits_{\RR}r^2  f\,d\theta dr
&=-\int\limits_{\RR}r^2\frac{d}{dr}(a[f]f)\,d\theta  dr d\theta'  dr'\\
&=-\int\limits_{\mathbb{R}^4}(r -r') \big( b(\theta -\theta')+ \sigma^2 b''(\theta-\theta')\big) w(r-r') ff'\,d\theta'  dr'd\theta  dr<0,
\end{align*}
using the short hand notation $f' = f(t,\theta', r')$.\\

\noindent The ratio of $\nu$ and $\sigma$ is important in order to be able to show the convergence of the team ratings to the average strength. This is the case under following assumption:  
\begin{enumerate}[label=($B'$)]
\item $b+\sigma^2 b''$ is monotonically increasing. \label{as:B2}
\end{enumerate}
Note that \ref{as:B2} holds for example for $b(z)=\tanh (\nu z)$  if $1 +  \nu^2 \sigma^2 (4 -  6 \operatorname{sech} (z \nu)^2) >0$. Then we can use similar arguments as Jabin and Junca \cite{a:jabin}, who considered the relative energy
\begin{align*}
\E (t)=\int\limits_{\mathbb{R}^2}(r-\theta)^2  f (t,\theta, r) \,dr d\theta.
\end{align*}
In the following we will show that
\begin{align}\label{eq:energydecay}
\frac{d \E(t)}{dt}< 0.
\end{align}
We calculate:
\begin{align*}
\frac{d}{dt}\int\limits_{\RR}(r-\theta)^2  f\,d\theta dr
&=-\int\limits_{\RR}(r-\theta)^2\frac{d}{dr}(a[f]f)\,d\theta  dr d\theta'  dr'\\
&=-\int\limits_{\mathbb{R}^4}(r-r') b(r-r')  w(r-r')ff'\,d\theta'  dr'd\theta  dr\\
&\quad-\int\limits_{\mathbb{R}^4}(\theta -\theta') \big( b(\theta
                                                                                      -\theta')+ \sigma^2 b''(\theta-\theta')\big) w(r-r') ff'\, d\theta'  dr'd\theta  dr <0.
\end{align*}
For $r-r'<0$ we have $b(r-r')<0$, while the opposite holds true for $(r-r')>0$. Because of \ref{as:B2} the second term is positive, yielding the stated energy decay. 

Assumption \ref{as:B}  together with  \ref{as:B2} gives us bounds for $b'''$, whereas we can deduce $b''$ being Lipschitz, too, with constant $L_2$. Following the arguments in Jabin and Junca, \cite{a:jabin}, we obtain:
\begin{theorem}
Let $f^0$ be as in Theorem \ref{theo:existence2} and $w\ge w_{min}>0$ on $\su f^0$. Then
\begin{align*}
\E (t) \leq \E(0)\exp(-2 w_{min}(L+\sigma^2 L_2)t),
\end{align*}
where $L,L_2$ depend on $b,b''$ (and therefore $\nu$) and on $\su
f^0.$ 
\end{theorem}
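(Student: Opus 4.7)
The plan is to mimic the relative-energy argument of Jabin and Junca and derive a Grönwall-type differential inequality for $\E(t)$. Starting from the dissipation identity displayed just above the statement,
\begin{align*}
\frac{d\E}{dt} = -\int_{\mathbb R^4}(r-r')b(r-r')w(r-r')ff'\,d\theta\,d\theta'\,dr\,dr' - \int_{\mathbb R^4}(\theta-\theta')(b+\sigma^2 b'')(\theta-\theta')w(r-r')ff'\,d\theta\,d\theta'\,dr\,dr',
\end{align*}
the goal is to bound both integrands from below by a constant multiple of $\E$ itself and then integrate the resulting ODE.

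First I would confirm that everything takes place on a fixed compact set. Since $\theta$ is constant along characteristics, $\su_\theta f(t,\cdot)=\su_\theta f^0$. For $r$, the $L^\infty$-bound \eqref{eq:gronwall} together with Lipschitz continuity of $b$ and boundedness of $w$ forces $\|a[f](t,\cdot)\|_{L^\infty}$ to stay bounded on $[0,T]$, so $\su_r f(t,\cdot)$ remains contained in a compact $K_T$. On this compact set, (B) delivers a uniform lower bound $L>0$ for $b'$, and (B') an analogous lower bound $L+\sigma^2 L_2>0$ for $b'+\sigma^2 b'''$; both exist by strict positivity plus continuity plus compactness. Combining with $b(0)=b''(0)=0$ (both odd) and the mean-value theorem gives, pointwise on $\su f\times \su f$,
\begin{align*}
(r-r')\,b(r-r') \ge L(r-r')^2,\qquad (\theta-\theta')(b+\sigma^2 b'')(\theta-\theta') \ge (L+\sigma^2 L_2)(\theta-\theta')^2.
\end{align*}

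To close the argument, I would use $w\ge w_{\min}$ on the support to obtain
\begin{align*}
\frac{d\E}{dt}\leq -w_{\min}\int_{\mathbb R^4}\bigl[L(r-r')^2+(L+\sigma^2 L_2)(\theta-\theta')^2\bigr]ff'\,d\theta\,d\theta'\,dr\,dr',
\end{align*}
and then invoke the identity
\begin{align*}
\int_{\mathbb R^4}\bigl((r-\theta)-(r'-\theta')\bigr)^2\,ff'\,d\theta\,d\theta'\,dr\,dr' = 2\E(t),
\end{align*}
which follows from mass conservation and the mean-zero constraints $\int rf = \int \theta f = 0$ of (A1), preserved in time (trivially in $\theta$, and in $r$ by oddness of the interaction). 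A weighted quadratic inequality of the form $\alpha a^2+\beta b^2\ge \gamma(a-b)^2$ then turns the previous estimate into $\E'(t)\le -2w_{\min}(L+\sigma^2 L_2)\E(t)$, and Grönwall's lemma yields the claimed bound.

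The hard part will be the sharp quantitative identification of $L$ and $L_2$ on $K_T$ together with the last quadratic step: the compactness of the support is essential to upgrade the soft monotonicity in (B), (B') into strictly positive lower bounds for $b'$ and $b'+\sigma^2 b'''$, and the quadratic inequality must be applied with the correct weights to recover exactly the rate $2w_{\min}(L+\sigma^2 L_2)$ rather than the weaker rate $w_{\min}L$ that the naive $(r-r')^2+(\theta-\theta')^2\ge \tfrac12((r-r')-(\theta-\theta'))^2$ would give.
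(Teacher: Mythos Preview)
Your plan has a genuine gap at the very step you yourself flag as ``the hard part''. Once you have
\[
\frac{d\E}{dt}\le -w_{\min}\int_{\mathbb R^4}\Bigl[L(r-r')^2+(L+\sigma^2 L_2)(\theta-\theta')^2\Bigr]ff'\,dr\,dr'\,d\theta\,d\theta',
\]
no weighted quadratic inequality of the form $\alpha a^2+\beta b^2\ge\gamma(a-b)^2$ can deliver $\gamma=L+\sigma^2 L_2$. The sharp constant is $\gamma=\alpha\beta/(\alpha+\beta)$, so with $\alpha=L$ and $\beta=L+\sigma^2 L_2$ you obtain at best $\gamma=L(L+\sigma^2 L_2)/(2L+\sigma^2 L_2)<L<L+\sigma^2 L_2$, and the resulting rate is strictly worse than the one stated in the theorem. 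The decoupling into separate $(r-r')^2$ and $(\theta-\theta')^2$ terms is exactly what loses the information needed. (There is a second, smaller issue: the two-term dissipation identity you copy from the text drops the cross terms $\int (r-r')\,w\,(b+\sigma^2 b'')(\theta-\theta')\,ff'$ and $\int(\theta-\theta')\,w\,b(r-r')\,ff'$, which do not vanish for general $f$; the paper's actual proof does not rely on that display.)

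The paper avoids both problems by never decoupling. It keeps the symmetrised dissipation in the form
\[
\mathcal D(f)=\int_{\mathbb R^4}\bigl(r-r'-\theta+\theta'\bigr)\,w(r-r')\,\bigl[b(\theta-\theta')+\sigma^2 b''(\theta-\theta')-b(r-r')\bigr]ff'\,dr\,dr'\,d\theta\,d\theta',
\]
inserts $\pm\sigma^2 b''(r-r')$ inside the bracket (the extra piece $\int(r-r')w_{\min}\sigma^2 b''(r-r')ff'$ has a sign and can be discarded), and is left with two summands each of the shape $(u-v)\bigl[h(v)-h(u)\bigr]$ with $h=b$ and $h=b''$, where $u=r-r'$ and $v=\theta-\theta'$. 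The lower bounds on $b'$ and $b'''$ on the compact support then give directly $(u-v)[h(v)-h(u)]\ge c_h(u-v)^2$, so the two contributions add to $(L+\sigma^2 L_2)(u-v)^2$ without any quadratic-inequality loss. Combining with $\int (u-v)^2 ff'=2\E$ and Gr\"onwall yields the stated rate. In short: do not split $(r-r')$ from $(\theta-\theta')$; estimate the difference $h(\theta-\theta')-h(r-r')$ against $|r-r'-\theta+\theta'|$ instead.
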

\begin{proof}
The proof is along the lines of \cite{a:jabin, mt:K2016}, adapted for
the additional term related to $b''$.  
We define
\begin{align*}
\mathcal D(f(t)):=\int\limits_{\mathbb{R}^4}(r-r'-\theta+\theta')w(r-r')[b(\theta-\theta')+\sigma^2 b''(\theta-\theta')-b(r-r')] ff'\, dr' d\theta' dr d\theta, 
\end{align*}
and by our previous calculations we have that $-\mathcal
D(f(t))=\frac{d \mathcal E(t)}{dt}\leqslant 0$.
Using similar symmetry arguments as before we deduce that 
\begin{align*}
-\int\limits_{\mathbb{R}^4} (r-r')w_{min}\sigma^2 b''(r-r')ff'\,d\theta'  dr'd\theta  dr<0.
\end{align*}
We now  split the integrands of $\mathcal D$ to obtain
\begin{align*}
\mathcal D(f(t)) \geq &
                        \int\limits_{\mathbb{R}^4}(r-r'-\theta+\theta')w_{min}[b(\theta-\theta')-b(r-r')]
                        ff'\, dr' d\theta' dr d\theta \\
& + \int\limits_{\mathbb{R}^4}(r-r'-\theta+\theta') w_{min} \sigma^2 [
                                                           b''(\theta-\theta')-b''(r-r')]ff'\,
                                                           dr'
                                                           d\theta'
                                                           dr d\theta.
\end{align*}
Using that $b,b''$ are Lipschitz and odd we have
\begin{align*}
(r-r'-\theta+\theta') w_{min}[b(\theta-\theta')-b(r-r')] & \geq L w_{min} \vert r-r'-\theta+\theta'\vert ^2 \\
(r-r'-\theta+\theta') w_{min}\sigma^2[b''(\theta-\theta')-b''(r-r')] & \geq \sigma^2 L_2 w_{min} \vert r-r'-\theta+\theta'\vert ^2.
\end{align*}
Therefore, it follows that
\begin{align*}
\mathcal D(f)\geqslant\int\limits_{\mathbb{R}^4}(L+\sigma^2 L_2) w_{min} \vert r-r'-\theta+\theta'\vert ^2  f' f\, dr' d\theta' dr d\theta.
\end{align*}
We assume w.l.o.g. (due to the translation invariance of the model)
\begin{align}
\int\limits_{\mathbb{R}^2}r  f(t,r,\theta)\ dr\ d\theta =\int\limits_{\mathbb{R}^2}\theta  f(t,r,\theta)\, dr d\theta =0
\end{align}
which gives 
\begin{align*}
\int\limits_{\mathbb{R}^4}(r-\theta)(r'-\theta')\ f' f\, dr' dr d\theta' d\theta=0. 
\end{align*}
Then we can deduce
\begin{align*}
\mathcal D(f)\geqslant 2 (L+\sigma^2 L_2) w_{min}\int\limits_{\mathbb{R}^2}\vert r-\theta\vert^2\ fdr\ d\theta 
\end{align*}
and altogether 
\begin{align*}
-\mathcal D(f)=\frac{d \mathcal E(t)}{dt}\leqslant -2 L w_{min}\int\limits_{\mathbb{R}^2}\vert r-\theta\vert^2\ f\ dr\ d\theta= -2 w_{min} (L+\sigma^2 L_2) \mathcal E(t).
\end{align*}
Using Gronwall's lemma we conclude the proof as in \cite{a:jabin, mt:K2016}.
\end{proof}




\noindent We conclude by underpinning our analytical results with numerical simulations.

\paragraph{Micro- and macroscopic simulations.}\label{subsubsec:num2}
For the microscopic simulation we consider $N=500$ players with fixed
mean strengths
$\theta_n$, chosen uniformly distributed in $[4,10]$. In every time-step we then choose $\mathcal{N}(\theta_n,\sigma)$ distributed values for the evaluation of $\Sij$. We set $\nu=0.5$ and simulate  $10^6$ time-steps of $\Delta t=0.1$ and 25 collisions per time-step over 50 realisations as in the previous simulations described in Section~\ref{re:microteams}\ref{subsec:nummericsmicro}. 
On a macroscopic level, we use the algorithm presented in
Section ~\ref{re:macroteams}\ref{subsec:nummericsmacro} reduced by
the dimension in $\sigma$.

We see a great agreement between the two
models in Figure \ref{fig:mikro_a11} and \ref{fig:makro_a11}. In
addition, we clearly see the influence of $\sigma$ on the ranking as
discussed in Figure~\ref{fig:makro_1}. If the variance $\sigma$ is
large, all teams are rated equally, in particular the ratings converge
to $7$ for all values of $\theta$.  Or expressed differently: in
expectation weaker teams are over-performing and stronger teams under-performing. We see a similar effect already in our first microscopic simulations in Figure~\ref{fig:mikro1}. 

\noindent Moreover, the numerical simulations show that $\nu$ can be
used to balance the variance $\sigma^2$ and obtain the desired
convergence of ratings to the teams' average strength as discussed in
the previous subsection. If assumption \ref{as:B2} holds the long-term
behaviour of \eqref{eq:kineticmodel2} will be similar to the original
Elo model \eqref{eq:origelo} in \cite{a:jabin}. This effect can also
be observed on a microscopic level, see Figure~\ref{fig:mikro_nu},
where we compare the long-term behaviour for $\sigma=2$ and different values of $\nu\in \{1,0.1,0.01\}$. For $\nu=1$ all expected values $\theta\in [4,10]$ converge to $7$ and we get an almost horizontal line in the long-term run. If we choose $\nu=0.1$ we get the desired diagonal $\theta=r$ as time increases, as in the case $\nu=0.01$. However, the smaller value of $\nu$ corresponds to a slower convergence towards the stationary state, as can be seen in Figure~\ref{fig:mikro_nu}, bottom right. 

\begin{figure}[htb]
\centering
\includegraphics[width=\textwidth]{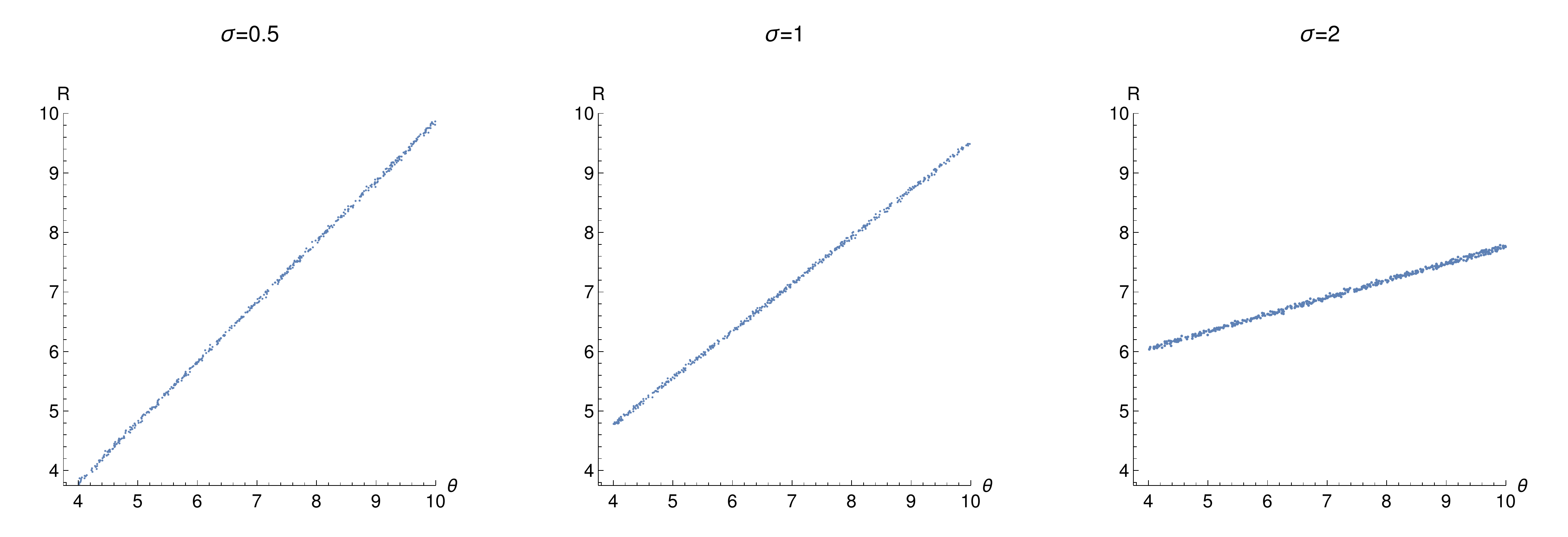} 
 \caption{Microscopic team distributions for $\nu=0.5$ and different values of $\sigma$.}
\label{fig:mikro_a11}
\end{figure}

\begin{figure}[htb]
 \centering
\includegraphics[width=\textwidth]{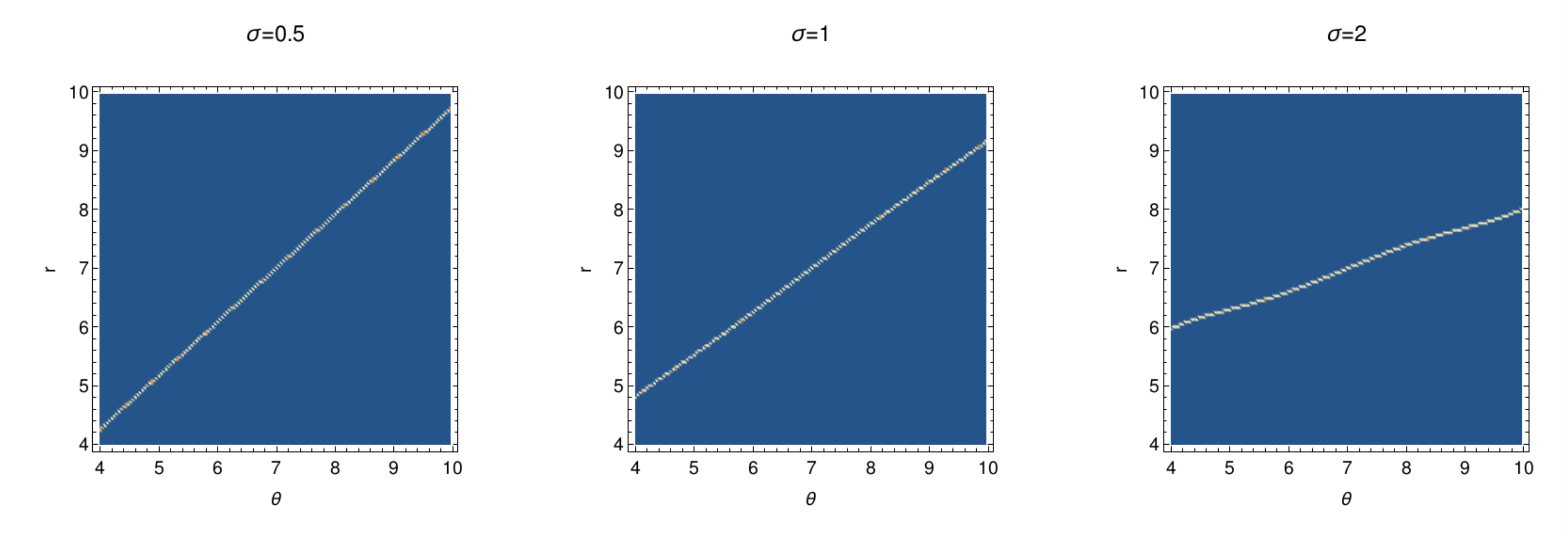} 
 \caption{Macroscopic results for  $\Omega=[4,10]\times [4,10]$ for
   different values of $\sigma$ and $\nu=0.5$.
 }\label{fig:makro_a11}
\end{figure}

\begin{figure}[htb]
 \centering
\includegraphics[width=\textwidth]{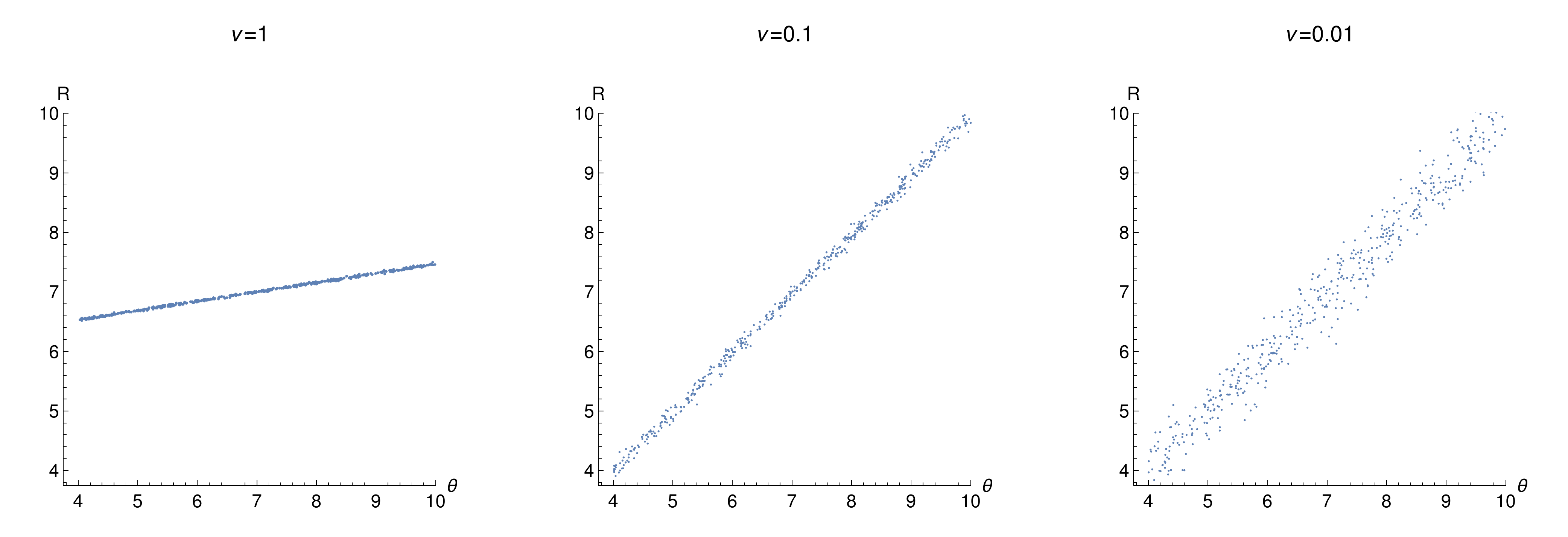}
 \caption{A smaller value of $\nu$ leads to slower convergence, however, we can retrieve the desired convergence of $R$ to the expected value $\theta$ by choosing $\nu$ sufficiently small. In all simulations we set $\sigma=2$.}\label{fig:mikro_nu}
\end{figure}

\section{Conclusion}\label{re:conclusion}

In this paper we proposed a generalisation of the Elo-rating model for
teams of players with varying strengths, which includes fluctuations
in the performance to account for example for variable line-ups in
team sports. Based on the microscopic interaction rules we then
derived the corresponding kinetic model, proved existence of a
solution and analysed different moments of its solution. These
analytical insights indicate the formation of non-trivial steady
states -- a hypothesis that is supported by our numerical results. 
Furthermore, we considered the special case of similar variance
$\sigma^2$, which allowed to formally derive a lower dimensional
equation. Under further smallness assumptions we could then use
techniques from Junca and Jabin, see \cite{a:jabin}, to show
convergence of the rating $r$ to the expected value $\theta$. The
smallness assumption relates to practically relevant parameter values. For example, in chess the scaling parameter $\nu$ in $b$ is usually quite small, around $\frac{1}{400}$, as reported in \cite{b:elo, b:fide, b:harkness}.  We were able to show numerically, both at the microscopic and kinetic level, that a large $\nu$ leads to the loss of convergence of $r\to\theta$. Choosing $\nu$ according to \ref{as:B2}, we obtain the desired convergence and were able to proof this analytically. This effect also occurs at the microscopic level.

 Nevertheless, the microscopic simulations showed that a large
$\sigma$ has a strong impact on the ratings. The
question therefore remains whether fluctuations in the underlying strength
should be included in $\rho$, see \cite{bib:During:elo},
or incorporated in the outcome of the game $\Sij$ (as proposed in this
paper). Following  \cite{bib:During:elo},
performance fluctuations could also be included via an additional
random term in the microscopic interactions. This leads to a PDE with
a diffusive term which is of
the following form:
\begin{align*}
\frac{\partial f(r,\theta, t)}{\partial t}= - \der \left(a[f]f(\theta,R,
  t)\right)+\frac{\s}{2} d[f]  \frac{\partial^2 }{\partial \theta^2}
f(\theta,r, t),
\end{align*}
with
\begin{align*} 
&a[f]=a[f](r,\theta, t)=\int_{\R^2}w(r-r') (b(\theta-\theta')-b(r-r'))f(\theta',r',t)\,d\theta' d r',\\
&d[f]=d[f] (r,\theta, t)= \int_{\R^2}w(r-r') f( r',\theta' ,t)\,d\theta' d r'
\end{align*}
where the influence of diffusion is determined by the maximum variance $\sigma^2$ of the team strengths. 

Accounting for uncertainty in ratings through an additional functional
dependence and not via diffusion also happens in the Glicko rating \cite{a:glicko98}, an
extension of the Elo rating. However, here the variable $\sigma$ is
the uncertainty of the rating. It is assumed that $\sigma$ increases
if players do not compete and decreases if they participate in
tournaments. This microscopic model could similarly be used to derive
a continuous kinetic rating model. Another interesting direction of future research is the combination of performance fluctuations with learning effects, as considered in \cite{bib:During:elo, mt:K2016}.

\subsection*{Contributions and Acknowledgments}
The paper has been conceived and is based on work from all three
  authors. MF obtained the analytical results under guidance from BD
  and MTW. MF carried out the numerical experiments with support from
  MTW. All authors worked on and approved the manuscript.

MF acknowledges partial support via the Austrian Science Fund (FWF) project F65. MTW acknowledges partial support via the New Frontier's Project NST-0001 of the Austrian Academy of Sciences \"OAW.

\appendix\section{Appendix}\label{re:appendix}
\subsection{Derivation of the Boltzmann-type equation}\label{re:teamsderivation}
We follow the derivation of \cite{bib:During:elo} for the corresponding PDE of Fokker-Planck type to study the dynamics of the corresponding model. We start with the evolution equation for the distribution of teams $\fe = \fe(\theta ,\sigma, R, t) $ with respect to their rating $R$, intrinsic team-strength $\rho$ and the variance $\si$.
For a fixed number of teams,  $N$, the interactions \eqref{eq:eloorig} induce a discrete-time Markov process  with
$N$-particle joint probability distribution
$P_N(\theta_1,\sigma_1,R_1,\theta_2,R_2,\dots,\theta_N,\sigma_N, R_N,\tau)$.
Then we can state the evolution of the first marginal from
\begin{align*}
P_1(\theta,\sigma, R,\tau)=\int P_N(\theta,\sigma,  R,\theta_2,\sigma_2, R_2,\dots, \theta_N ,\sigma_N, R_N,\tau) 
d\theta_2 d\sigma_2  dR_2 \cdots  d\theta_N d\sigma_N dR_N,
\end{align*}
where $\tau$ is the discrete time step
using only the one- and two-particle distribution functions \cite{b:cerc,a:cerc} in a single time step,
\begin{align*}
\begin{split}
& P_1(\theta,\sigma,  R,\tau+1)-P_1(\theta,\sigma,  R,\tau)=\\
& \Bigg\langle \frac 1N \Biggl[\int_{\mathbb{R}^6}
P_2(\ti,\si, R_i, \tj,\sj,  R_j,\tau) w(\Ri-\Rj)\bigl( \delta_0(\theta-\ti^*, R-\Rit)+\delta_0(\rho-\rjt,R-\Rjt) \bigr)\cdot \\ 
& \cdot d\ti d\si dR_i d\tj d\sj dR_j -2P_1(\theta,\sigma, R,\tau) \Biggr ]\Biggr\rangle.
\end{split}
\end{align*}
Here, $\langle\cdot\rangle$ denotes the mean operator with respect to
the random variables $\Sij$ and the function $w(\cdot)$ corresponds to the interaction rate function which depends on the difference of the ratings. 
This yields a hierarchy of equations, the so-called
BBGKY-hierarchy, see \cite{b:cerc,a:cerc}, describing the dynamics of the system of a
large number of interacting agents.

A standard approximation is to neglect correlations and assume that
\begin{align*}
P_2(\ti,\si, R_i, \tj,\sj,  R_j,\tau)=P_1(\ti,\si,  R_i,\tau)P_1(\tj,\sj,  R_j,\tau),
\end{align*}
By scaling time as $t=2\tau/N$ and performing the thermodynamical
limit $N\to\infty$, we can use standard methods of kinetic theory
\cite{b:cerc,a:cerc} to show that the time-evolution of the
one-agent distribution function $\fe$ (corresponding to $P_1$ and $\fe \fe$ to $P_2$) is governed by  the
following  Boltzmann-type equation:
\begin{align}\label{eq:bolt2}
\begin{aligned}
& \frac{d}{dt}\int_{\D}   \phi(\theta,\sigma,r )\fe(\theta,\sigma,r ,t) d\theta d\sigma dr =  \\ 
& \frac{1}{2} \Bigg\langle \int_{\D}\int_{\D}    \Big(\phi(\theta,\sigma,r^*)+\phi(\theta',\sigma',r'^*)-\phi(\theta,\sigma,r )-\phi(\theta',\sigma',r' ) \Big)\cdot \\
& \cdot w(r-r')\fe(\theta,\sigma,r ,t)\fe(\theta',\sigma',r' ,t) d\theta' d\sigma' dr' d\theta d\sigma dr  \Bigg\rangle,
\end{aligned}
\end{align}
where $\phi(\cdot)$ is a (smooth) test function, with support
$\mathrm{supp}({\phi})\subseteq \D$. 

\subsection{Analysis of the Boltzmann-type equation}
\subsubsection*{Conservation of mass}
Setting  $\phi(\theta,\sigma,r )=1$ in equation \eqref{eq:bolt2} we have
\begin{align*}
\frac{d}{dt} \int_{\R^3} \fe(\theta,\sigma,r ,t) d\theta d\sigma dr=0.
\end{align*}
Therefore, the total mass is conserved, that is
\begin{align*}
\int_{\R^3} \fe(\theta,\sigma,r ,t) d\theta d\sigma dr=1, \ \text{ for all times } t \geq 0.
\end{align*}

\subsubsection*{Moments with respect to the rating}
We define the $s$-th moment for $s\in\mathbb{N}$ with respect to $r$
\begin{align}\label{eq:moment}
m_{s,r}(t)=\int_{\R^3}r^s \fe(\theta,\sigma,r ,t) d\theta d\sigma dr.
\end{align}
Now choose $\phi(\theta,\sigma,r )=r$. Due to \ref{as:B}, \ref{a:finit} and the symmetry of $b(\cdot)$ and $b''(\cdot)$ we obtain 
\begin{align}\label{eq:conm1}
\begin{aligned}
\frac{d}{dt} m_{1,r}(t)=&\frac{1}{2}\var\int_{\R^6}  \fe(\theta,\sigma,r ,t)  \fe(\theta',\sigma',r' ,t) w(r-r')\cdot \\
&  \cdot \bigg( b(\theta-\theta')-\frac{1}{2}b''(\theta-\theta')(\s+\sigma'^2)-b(r-r')+ \\
& + b(\theta'-\theta)+\frac{1}{2}b''(\theta'-\theta)(\s+\sigma'^2)-b(r'-r)\bigg) d\theta' d\sigma' dr' d\theta d\sigma dr=0.
\end{aligned}
\end{align}
Hence the mean value w.r.t. the rating is preserved in time and therefore $m_{1,r}=0$ for all times $t\geq 0$. 


\subsubsection*{Moments with respect to the variance and expected value}
We define $m_{s,\theta}$ and $m_{s,\s}$ similar to \eqref{eq:moment} and also need the boundedness of the second moment $m_{2,\s}$.
\begin{align}\label{eq:conm2s}
\frac{d}{dt} m_{2,\s}(t)=0
\end{align}
which follows directly from \eqref{eq:bolt2} when testing with  $\phi(\theta,\sigma,r )= \s$. Analogue we get
\begin{align}\label{eq:conm2theta}
\frac{d}{dt} m_{1,\theta}(t)=\frac{d}{dt} m_{2,\theta}(t)=0.
\end{align}

\subsection{Derivation of the Fokker-Planck equation}
We now derive the limiting Fokker-Planck
equation in the case $\gamma \rightarrow 0$.
Based on the interaction rules \eqref{eq:eloorig}, which define the outcome of a game, we compute the expected values of the following quantities:
\begin{align}\label{eq:erwartungswerte}
& \langle r^*-r\rangle  =\var (\langle 
S
 \rangle -b(r-r'))\notag\\
& \Var[r^*-r]  = \var ^2 \Var [\Sij]\\
& \langle(r^*-r)^2\rangle  =\var ^2(\langle 
S
 \rangle-b(r-r'))^2+\Var[r^*-r]= \var ^2\big(\langle 
S
 \rangle-b(r-r'))^2+\Var [
S
]\big)\notag
\end{align}
with $S$ analogue to \eqref{eq:taylorE} where we used $\langle X^2 \rangle =\langle X \rangle^2 +\operatorname{Cov}[X,X]=\langle X \rangle^2+\Var [X]$. 
Using Taylor expansion of $\phi(\theta,\sigma,r^*)$ up to order two around $(\theta,\sigma,r )$, we obtain
\begin{align*}
\langle \phi(\theta,\sigma,r^*)-\phi(\theta,\sigma,r ) \rangle = & \langle r^*-r \rangle\der \phi(\theta,\sigma,r )+\frac{1}{2}\langle(r^*-r)^2\rangle\derr \phi(\theta,\sigma,r )+ \\
&+\langle \Rem (\phi, \theta,\sigma,r ,\t)\rangle, 
\end{align*}
where the remainder term  $\Rem$ is given in the Peano-representation of Taylor's formula via
\begin{align*}
\langle\Rem (\phi, \theta,\sigma,r ,\t) \rangle & = \frac{1}{2}\langle(r^*-r)^2\rangle \derr (\phi (\theta,\sigma, \bar{r})-\phi (\theta,\sigma, r))\\
& = \frac{1}{2}\var ^2\big(\langle 
S
 \rangle-b(r-r'))^2+\Var [
 S
  ]\big)\derr (\phi (\theta,\sigma, \bar{r})-\phi (\theta,\sigma, r))
\end{align*}
for  some $0\leq c \leq 1$ with $\bar{r}$  defined as 
\begin{align*}
\bar{r}= c r+(1-c)r^*.
\end{align*}
\noindent Next we rescale time as $\t=\var t$ and insert the expansion in \eqref{eq:bolt2}. This yields 
\begin{align*}
\begin{split}
 \frac{d}{d\t}\int_{\R^3} & \phi(\theta,\sigma,r )\fe(\theta,\sigma,r ,\t) d\theta d\sigma dr 
 = \frac{1}{2\var}\int_{\R^3} \Remm (\phi,\theta,\sigma,r ,\t)\fe(\theta,\sigma,r ,\t)d\theta d\sigma dr + \\
+\int_{\R^6} & \bigg( \der \phi(\theta,\sigma,r )\big( b(\theta-\theta')+ K(\theta-\theta',\sigma,\sigma')-b(r-r')\big) \cdot \\
 & \cdot w(r-r')\fe(\theta,\sigma,r ,\t) \fe(\theta',\sigma',r' ,\t) \bigg) d\theta' d\sigma' d r' d\theta d\sigma r 
\end{split}
\end{align*}
whereas the remainder $\Remm$ is given by
\begin{align}
\begin{split}
\Remm (\phi,\theta,\sigma,r ,\t)=   \int_{\R^3} \langle \Rem &(\phi, r'^*,\theta',\sigma',r' ,\t)\rangle w(r'-r)\fe(\theta',\sigma',r' ,\t)d\theta' d\sigma' dr' \\
 + \var^2\int_{\R^3} & \dersrs \phi (\theta',\sigma',r' ) \bigg( \big(\langle 
S 
 \rangle-b(r'-r)\big)^2+ \Var [
 S
 ] \bigg) \cdot \\
&  \cdot  w(r-r')\fe(\theta',\sigma',r' ,\t)d\theta' d\sigma' dr'.
\end{split}
\end{align}

All summands will vanish for $\gamma\to 0$ with similar arguments as in \cite{bib:During:elo}. Let us assume that $\phi(\theta,\sigma,r )$ belongs to the space $\mathcal{C}_{2+\delta}(\R^3)=\{h:\R^3\rightarrow\R, \ \|D^\zeta h\|_\delta<+\infty\}$, where $0<\delta\leq 1$, $\zeta$ is a multi-index with $|\zeta|\leq2$ and the seminorm $\|\cdot\|_\delta$ is the usual H\"older seminorm
\begin{align*}
 \|f\|_\delta=\sup_{x,y\in \R^3}\frac{|f(x)-f(y)|}{|x-y|^\delta}.
\end{align*}
Equations \eqref{eq:taylorE},\eqref{eq:taylorV} together with conservation laws \eqref{eq:conm2s} and \eqref{eq:conm2theta} guarantee the boundedness of both expectation $\langle 
S
 \rangle$ and variance $ \Var [
S
 ]$. Then with this choice of $\phi(\theta,\sigma,r )$, both summands  containing $\derr \phi$ vanish using the same arguments as in \cite{a:06toscani,a:09pareschi}.

\noindent Therefore, the density $\fe(\theta,\sigma,r ,\t)$ converges to $f(\theta,\sigma,r ,\t)$ 
which solves
\begin{align} \label{eq:weakeq}
\begin{aligned}
\frac{d}{d\tau}  \int_{\R^3} & \phi(\theta,\sigma,r)f(\theta,\sigma,r ,\t) d\theta d\sigma dr =\int_{\R^3} f(\theta,\sigma,r ,\t) \der \phi(\theta,\sigma,r)\cdot \\
 & \cdot\bigg[ \int_{\R^3} w(r-r')(b(\theta-\theta')+ K(\theta-\theta',\sigma,\sigma')-b(r-r'))f(\theta',\sigma',r' ,\t)\\
 &\qquad \qquad  d\theta' d\sigma' dr'\bigg] d\theta d\sigma dr 
 \end{aligned}
\end{align}
\noindent It remains to show that for suitable boundary conditions equation \eqref{eq:weakeq} gives the desired weak formulation of the Fokker-Planck equation. We calculate
\begin{align*}
\int_{\R} \bigg( & f (\theta,\sigma,r,\t) \phi (\theta,\sigma,r)\big( \int_{\R^3} w(r-r')\cdot  \\
& \cdot (b(\theta-\theta')+ K(\theta-\theta',\sigma,\sigma')-b(r-r'))f(\theta',\sigma',r' ,\t) d\theta' d\sigma' dr' \big) \bigg)_{r=-\infty}^{r=+\infty} dr d\sigma
\end{align*}
This term is zero, if 
\begin{align}\label{eq:a2}
\lim_{|r|\rightarrow +\infty} f(\theta,\sigma,r,\t)=0
\end{align}
These boundary condition are guaranteed for the Boltzmann equation $\fe(\theta,\sigma,r,\t)$ by mass conservation and preservation of the first moment $m_{1,r}$, see \eqref{eq:conm1}.
Then \eqref{eq:weakeq} is the weak form of the Fokker-Planck equation
\begin{align} \label{eq:fokkerplanck}
\frac{d}{d\tau}  \int_{\R^3}  &\phi(\theta,\sigma,r )f(\theta,\sigma,r ,\t) d\theta d\sigma dr = \notag\\
-\int_{\R^3} & \phi(\theta,\sigma,r ) \der\bigg[f(\theta,\sigma,r ,\t) \int_{\R^3} w(r-r')\cdot \\
&\cdot \big(b(\theta-\theta')+ K(\theta-\theta',\sigma,\sigma')-b(r-r'))f(\theta',\sigma',r' \big)  d\theta' d\sigma' dr'\bigg]d\theta d\sigma dr .\notag
\end{align}

\printbibliography

\end{document}